\documentclass[a4paper]{amsproc}
\usepackage{amsmath}
\numberwithin{equation}{section}
\usepackage{amsthm}
\usepackage{amsfonts}
\usepackage{amssymb}
\usepackage{mathrsfs}
\usepackage{tikz}
\usepackage{environ}
\usepackage{elocalloc}
\usepackage{url}
\usepackage{caption}
\usepackage{CJKutf8}
\usepackage{caption}
\usepackage{mathtools}
\usepackage{dcpic}
\usepackage{tikz-cd}
\usepackage{hyperref}
\usepackage{enumitem}
\usepackage{stmaryrd}
\usetikzlibrary{positioning}
\usetikzlibrary{arrows,chains,positioning,scopes,quotes}
\usetikzlibrary{decorations.markings}

\setlength{\parskip}{\baselineskip}%
\setlength{\parindent}{0pt}%
\title{Stationary one-sided area-minimizing hypersurfaces with isolated singularities}
\author{Zhenhua Liu}
\dedicatory{Dedicated to Xunjing Wei}
\begin{document}
	\maketitle
	\begin{abstract}
		We extend the results of Hardt and Simon in \cite{HS} to prove that isolated singularities of stationary one-sided area-minimizing hypersurfaces can be locally perturbed away on the side that they are minimizing.
	\end{abstract}

	\newcommand{\ai}{\alpha}
	\newcommand{\su}{\Subset}
	\newcommand{\be}{\beta}
	\newcommand{\Ga}{\Gamma}
	\newcommand{\ga}{\gamma}	
	\newcommand{\de}{\delta}
	\newcommand{\De}{\Delta}
	\newcommand{\e}{\epsilon}
	\newcommand{\lam}{\lambda}
	\newcommand{\di}{\textnormal{div}}
	\newcommand{\Lam}{\Lamda}
	\newcommand{\om}{\omega}
	\newcommand{\Om}{\Omega}
	\newcommand{\si}{\sigma}
	\newcommand{\Si}{\Sigma}
	\newcommand{\vp}{\varphi}
	\newcommand{\rh}{\rho}
	\newcommand{\ta}{\theta}
	\newcommand{\spt}{\textnormal{spt}}
	\newcommand{\Ta}{\Theta}
	\newcommand{\W}{\mathcal{O}}
	\newcommand{\ps}{\psi}
	
	\newcommand{\mf}[1]{\mathfrak{#1}}
	\newcommand{\ms}[1]{\mathscr{#1}}
	\newcommand{\mb}[1]{\mathbb{#1}}
	\newcommand{\cd}{\cdots}
	
	\newcommand{\s}{\subset}
	\newcommand{\es}{\varnothing}
	\newcommand{\cp}{^\complement}
	\newcommand{\bu}{\bigcup}
	\newcommand{\ba}{\bigcap}
	\newcommand{\co}{^\circ}
	\newcommand{\hm}{\mathcal{H}}
	\newcommand{\ito}{\uparrow}
	\newcommand{\wc}{\rightharpoonup}
	\newcommand{\dto}{\downarrow}

	\newcommand{\ti}[1]{\tilde{#1}}
	\newcommand{\la}{\langle}
	\newcommand{\ra}{\rangle}
	\newcommand{\ov}[1]{\overline{#1}}
	\newcommand{\no}[1]{\left\lVert#1\right\rVert}
	\DeclarePairedDelimiter{\cl}{\lceil}{\rceil}
	\DeclarePairedDelimiter{\fl}{\lfloor}{\rfloor}
	\DeclarePairedDelimiter{\ri}{\la}{\ra}

	\newcommand{\du}{^\ast}
	\newcommand{\sm}{\setminus}
	\newcommand{\pf}{_\ast}
	\newcommand{\is}{\cong}
	\newcommand{\n}{\lhd}
	\newcommand{\m}{^{-1}}
	\newcommand{\reg}{\textnormal{reg}}
	\newcommand{\ts}{\otimes}
	\newcommand{\ip}{\cdot}
	\newcommand{\op}{\oplus}
	\newcommand{\xr}{\xrightarrow}
	\newcommand{\xla}{\xleftarrow}
	\newcommand{\xhl}{\xhookleftarrow}
	\newcommand{\xhr}{\xhookrightarrow}
	\newcommand{\mi}{\mathfrak{m}}
	\newcommand{\lp}{\llcorner}
	\newcommand{\wi}{\widehat}
	\newcommand{\ma}{\mathbb{M}}
	\newcommand{\sch}{\mathcal{S}}

	\newcommand{\w}{\wedge}
	\newcommand{\X}{\mathfrak{X}}
	\newcommand{\pd}{\partial}
	\newcommand{\dx}{\dot{x}}
	\newcommand{\dr}{\dot{r}}
	\newcommand{\dy}{\dot{y}}
	\newcommand{\dth}{\dot{theta}}
	\newcommand{\pa}[2]{\frac{\pd #1}{\pd #2}}
	\newcommand{\na}{\nabla}
	\newcommand{\dt}[1]{\frac{d#1}{d t}\bigg|_{ t=0}}
	\newcommand{\ld}{\mathcal{L}}

	\newcommand{\N}{\mathbb{N}}
	\newcommand{\R}{\mathbb{R}}
	\newcommand{\Z}{\mathbb{Z}}
	\newcommand{\Q}{\mathbb{Q}}
	\newcommand{\C}{\mathbb{C}}
	\newcommand{\bh}{\mathbb{H}}
	
	\newcommand{\lix}{\lim_{x\to\infty}}
	\newcommand{\li}{\lim_{n\to\infty}}
	\newcommand{\infti}{\sum_{i=1}^{\infty}}
	\newcommand{\inftj}{\sum_{j=1}^{\infty}}
	\newcommand{\inftn}{\sum_{n=1}^{\infty}}	
	\newcommand{\snz}{\sum_{n=-\infty}^{\infty}}	
	\newcommand{\ie}{\int_E}
	\newcommand{\ir}{\int_R}
	\newcommand{\ii}{\int_0^1}
	\newcommand{\sni}{\sum_{n=0}^\infty}
	\newcommand{\ig}{\int_{\ga}}
	\newcommand{\pj}{\mb{P}}
	
	\newcommand{\io}{\textnormal{ i.o.}}
	\newcommand{\aut}{\textnormal{Aut}}
	\newcommand{\out}{\textnormal{Out}}
	\newcommand{\inn}{\textnormal{Inn}}
	\newcommand{\mult}{\textnormal{mult}}
	\newcommand{\ord}{\textnormal{ord}}
	\newcommand{\F}{\mathcal{F}}
	\newcommand{\V}{\mathbf{V}}	
	\newcommand{\II}{\mathbf{I}}
	\newcommand{\ric}{\textnormal{Ric}}
	\newcommand{\sef}{\textnormal{II}}
	
	\newcommand{\wh}{\Rightarrow}
	\newcommand{\eq}{\Leftrightarrow}
	
	\newcommand{\eqz}{\setcounter{equation}{0}}
	\newcommand{\se}{\subsection*}
	\newcommand{\ho}{\textnormal{Hom}}
	\newcommand{\ds}{\displaystyle}
	\newcommand{\tr}{\textnormal{tr}}
	\newcommand{\id}{\textnormal{id}}
	\newcommand{\im}{\textnormal{im}}
	\newcommand{\ev}{\textnormal{ev}}
	
	\newcommand{\gl}{\mf{gl}}
	\newcommand{\sll}{\mf{sl}}
	\newcommand{\cs}{\Subset}
	\newcommand{\so}{\mf{so}}
	\newcommand{\ad}{\textnormal{ad}}

	\theoremstyle{plain}
	\newtheorem{thm}{Theorem}[section]
	\newtheorem{lem}[thm]{Lemma}
	\newtheorem{prop}[thm]{Proposition}
	\newtheorem*{remark}{Remark}
	\newtheorem*{cor}{Corollary}
	\newtheorem*{pro}{Proposition}
	
	\theoremstyle{definition}
	\newtheorem{defn}{Definition}[section]
	\newtheorem{conj}{Conjecture}[section]
	\newtheorem{exmp}{Example}[section]
	
	\theoremstyle{remark}
	\newtheorem*{rem}{Remark}
	\newtheorem*{note}{Note}
	\section{Introduction}
	Area-minimizing hypersurfaces in $n+1$-dimensional manifolds $M^{n+1}$ are known to be smooth outside a set of Hausdorff dimension at most $n-7$ (\cite{HF2}). However, little is known about the geometry of the singular sets. In this regard, the best results to date are the structure theory of area-minimizing hypersurfaces with isolated singularities developed by Robert Hardt and Leon Simon in \cite{LS2} and \cite{HS}. Roughly speaking, they have proved that
	\begin{itemize}
		\item[(i)] On any side of an area-minimizing hypercone, all area-minimizing boundaries confined in that side are smooth and unique up to scalings.
		\item[(ii)] (Based on (i)) On any side of an area-minimizing hypersurface, if the $n-1$ boundary of a Plateau problem is close enough to a smooth $n-1$ submanifold of the cone, then the solution to the Plateau problem is smooth.
	\end{itemize}
	We will explain the terminologies contained in those statements later. These statements basically say that isolated singularities of area-minimizing hypersurfaces behave as well as one can imagine, in that one can always perturb them away locally in a suitable sense to get smooth objects. 
	
	In this paper, we extend these results to hypersurfaces that are only area-minimizing on one side. In other words, the area of such a hypersurface will not decrease if we deform it to one side of the original hypersurface. Most of the statements are simple adaptations. However, the hardest part to deal with is the convergence of varifold compared with the convergence of current for those one-sided minimizing objects. Since they're not even locally minimizing, a general compactness theorem like the one for minimizing currents is impossible. Thus, we no longer have simultaneous convergence to the same thing modulo orientation in the two different topologies of currents and varifolds, and indeed there are simple counterexamples. We deal with this by exploiting special structures of one-sided area minimizing currents.
	\section{Preliminaries}
	In our paper, we deal with integral currents, sets of finite perimeter (Caccioppoli sets) and integral varifolds. Basically, an integral $k$ current $T$ in an $n+1$-dimensional manifold is a $k$-rectifiable set $R$ in $M$ equipped with a $\hm^{k}$ measurable and integrable integer-valued function $\ta$ and a $\hm^k$-measurable and integrable $k$-vector-valued function $\nu$. It acts on $k$-forms on $M$ by
	\begin{align*}
	T(\om)=\int_R \ri{\om,\nu}\ta d\hm^k.
	\end{align*} We denote the rectifiable set $R$ as $\text{spt}T,$ and $\ta$ the multiplicity of $T.$ We will use $\ma(T)$ to denote the mass of $T.$ The associated integral varifold $V=\mu_T$ of $T$ is defined as dropping the orientation, and thus a measure only (Chapter 4 of \cite{LS1}). A set of finite perimeter in an open set $\Om$ is a Borel set $F$ whose perimeter in $\Om$ defined as
	\begin{align*}
	P(F,\Om)=\sup\left\{\int_{\Om}1_F\di_M Xd\mu_{M}\bigg||X|\le 1,X\in C^1_c(M,TM)\right\},
	\end{align*} is finite. In other words, the indicator function $1_F$ is a BV function in $\Om.$ For introductions to these topics, please refer to \cite{HF1} for currents, \cite{WA1} for varifolds, \cite{LA}, \cite{FM}, and \cite{MM} for sets of finite perimeters and \cite{LS1} for all of them.
	\begin{defn}\label{defoneside}
		
		(Compare Definition in Section II of \cite{LF3}) Let $E$ be an open set which has finite perimeter in $M^{n+1}$. Suppose $T=\pd [[E]]$ is the boundary and $T$ is stationary as a current. We say $T$ is a one-sided area-minimizing hypersurface if for any $n$-dimensional integral current $X$ with $\pd X=0,$ and $\spt X\s \ov{E},$ we have
		\begin{align*}
		\ma(T)\le \ma(T+X).
		\end{align*}
	\end{defn}
	Why do we impose the stationarity? Using elementary geometry, one can show that any convex polygon in $\R^2$ is one-sided area (length) minimizing in the unbounded component of $\R^n$ that it divides into. Thus, in order to deduce any minimal surface type codimension $7$ regularity, we have to assume that $T$ is stationary.
	\begin{prop}\label{regone}
		For any one-sided area minimizing current $T=\pd[[E]]$ as defined above, the support of $T$ is smooth outside a set of Hausdorff dimension at most $n-7$, (discrete when $n-7$), and the associated varifold $\mu_T$ is stable. 
	\end{prop}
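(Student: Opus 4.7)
The plan has two parts: first I translate Definition \ref{defoneside} into an inner-minimizing statement about Caccioppoli sets and use it to deduce stability of $\mu_T$ via the quadratic second variation; then I derive the singular-set bound from the standard regularity program applied to the inner minimizer $E$.

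Observe first that if $F$ is a Caccioppoli set with $F\s E$ and $E\sm F\su M$, then $X:=-\pd[[E\sm F]]$ is an integral $n$-cycle with $\spt X\s\ov E$ and $T+X=\pd[[F]]$, so Definition \ref{defoneside} gives $\ma(T)\le\ma(\pd[[F]])$. Thus $T$ minimizes perimeter among Caccioppoli subsets of $E$; call this the \emph{inner-minimizing} reformulation. For stability, let $p\in\spt T$ be a $C^2$-regular point, $\nu$ the unit normal pointing away from $E$, and $\phi\in C_c^\infty(\spt T_{\reg})$ with $\phi\ge 0$. The normal deformation $T_t$ generated by $-t\phi\nu$ sweeps into $\ov E$ for small $t\ge 0$, so $T_t-T$ is a cycle in $\ov E$ and the hypothesis gives $\ma(T_t)\ge\ma(T)$. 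Expanding
\[
\ma(T_t)=\ma(T)+t\,\delta T(\phi\nu)+\tfrac{t^2}{2}Q(\phi)+O(t^3)
\]
and using $\delta T=0$ from stationarity yields $Q(\phi)\ge 0$, where $Q$ is the standard Jacobi quadratic form $\int(|\nabla\phi|^2-(|\sef|^2+\ric(\nu,\nu))\phi^2)\,d\hm^n$. Since $Q(\phi)=Q(|\phi|)$, the inequality extends to arbitrary $\phi\in C_c^\infty(\spt T_{\reg})$, so $\mu_T$ is stable.

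For regularity I will run the classical De Giorgi--Federer--Almgren program on the inner minimizer $E$. Stationarity supplies Allard's monotonicity formula, so densities and tangent cones are well defined at every $p\in\spt T$. Lower semicontinuity of perimeter under $L^1_{loc}$ convergence of characteristic functions propagates inner minimization and stationarity to any blow-up limit, so tangent cones are inner-minimizing stationary hypercones, and the second-variation argument above then shows they are also stable. Allard's regularity theorem handles flat tangent cones, and for Federer dimension reduction the required input is that in $\R^k$ with $k\le 7$ every stable stationary integer-multiplicity hypercone is a hyperplane (Simons). This rules out singular tangent cones in low dimensions and produces the Hausdorff-dimension bound $\le n-7$, with discreteness of the singular set when $n=7$.

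The main obstacle is not stability---which is essentially formal once Definition \ref{defoneside} is unpacked---but making the blow-up inheritance of one-sided minimization rigorous. Under a rescaling $r_i\to 0$ the sets $(E-p)/r_i$ converge in $L^1_{loc}$ along a subsequence to some Caccioppoli set $E_\infty$, and one must simultaneously identify the weak current limit of $T_{p,r_i}$ with $\pd[[E_\infty]]$ and transfer the inequality $\ma(\pd[[E_\infty]])\le\ma(\pd[[E_\infty]]+X)$ for $\spt X\s\ov{E_\infty}$ via a diagonal sequence of inner competitors. This is routine for Caccioppoli sets but must be done carefully given the paper's own later emphasis on the subtle discrepancy between current and varifold convergence for one-sided minimizers.
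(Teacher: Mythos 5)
Your stability argument is correct and is essentially the paper's: the paper decomposes an arbitrary normal field as $X=U-V$ with $U,V$ of disjoint support and uses that the second variation form is even in its argument, which is exactly your observation $Q(\phi)=Q(|\phi|)$. That half of the proposal stands.

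The regularity half has a genuine gap. The input you attribute to Simons --- ``in $\R^k$ with $k\le 7$ every stable stationary integer-multiplicity hypercone is a hyperplane'' --- is false as stated: a pair of transversely intersecting hyperplanes, or a union of half-hyperplanes meeting along a common $(n-1)$-dimensional edge, is a stationary integral hypercone that is stable on its regular part, and Simons' theorem only applies to cones that are smooth away from the vertex. Ruling out these ``classical'' codimension-one singularities is precisely the extra step that must use the one-sided minimizing structure; the paper does it by a corner-rounding comparison (adapting Theorem 7.2 of \cite{BW}) before it is allowed to invoke the Regularity and Compactness Theorem of \cite{NW1}. Without that step you only get the Schoen--Simon/Wickramasekera dichotomy, not the $n-7$ bound. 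Separately, the step you defer to the end --- propagating inner-minimization and stationarity to blow-up limits and identifying the varifold tangent cone with $\pd[[E_\infty]]$ --- is not ``routine for Caccioppoli sets'' here: a one-sided minimizer admits no two-sided comparison, so mass continuity along the rescaling fails a priori, the varifold limit can carry even multiplicity that cancels in the current limit, and stationarity does not pass to $L^1$ limits of characteristic functions. This is exactly the current-versus-varifold discrepancy the paper resolves later with the almost-conical comparison surfaces, and it cannot be absorbed into a diagonal argument. Relatedly, one cannot run the De Giorgi excess-decay program on the inner minimizer $E$ alone: one-sided minimizers are not almost-minimizers (the paper's convex polygon example has corners), so stationarity must enter the regularity argument in an essential way, which forces the varifold framework the paper adopts.
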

	\begin{proof}
		As a conclusion of \cite{LF3}, any one-sided area-minimizing current is smooth except for a singular set of Hausdorff dimension at most $n-7.$ 
		
		Alternatively, we can prove this by invoking the regularity of stable varifolds as in \cite{NW1} by ruling out codimension $1$ singularities. The same reasoning as the last part of this proof shows that $\mu_T$ is stable in its regular part, thus by the Regularity and Compactness Theorem in \cite{NW1}, if we can rule out singularities which comes from transverse intersections of $C^{1,\ai}$ manifolds, then we get the usual codimension $7$ regularity. By focusing on a very small geodesic ball, such intersections roughly look like transversely intersecting half-hyperplanes. Thus, we can cut always short the area by rounding the corners. For the details, a straightforward adaptation of ruling out codimension $1$ singularity in Theorem 7.2 in \cite{BW} can be used.
		
		Next, we prove that $\mu_T$ is stable. By Remark 27.7 of \cite{LS1}, we have $\ma_W(T)=\no{\mu_T}(W)$ for any open set $W.$ For any one-parameter families of diffeomorphisms $\phi_t$ generated by a vector field $X.$ We always have $\ma_W((\phi_t)\pf T)=((\phi_t)\pf \mu_T)(W)$ (see comments just before Theorem 27.3 in \cite{LS1}). Thus, the stationarity of $T$ as a current and $\mu_T$ as a varifold are the same. Since $T$ is smooth outside a set of Hausdorff dimension at most $n-7,$ the stability of the varifold is equivalent to stability in the regular part. (Vanishing $n-2$-Hausdorff dimension of singular parts suffices for this statement by using cutting-off functions on the regular part.) For any vector fields $X$ supported on the regular part of $T$, we use $\phi_t$ to denote the associated 1-parameter family of diffeomorphisms. If $X$ points into $E$, then we have
		\begin{align*}
		\frac{d^2}{dt^2}\hm^n(\phi_t(\reg{T}))\ge 0
		\end{align*}by the definition of being one-sided area-minimizing. For general $X,$ we can decompose $X$  out of $X=U-V,$ with $U,V$ pointing out of $E$ and $U,V$ orthogonal (vanishing when the other is not). $U,V$ are only Lipschitz continuous but the second variation formula in manifolds for stationary hypersurfaces (such as 1.143 in \cite{CM}) still holds. We have
		\begin{align*}
		\frac{d^2}{dt^2}\hm^n(\phi_t(\reg{T}))=&-\int_{\reg{T}}\no{\ri{A(\ip,\ip),X}}^2+\int_{\reg{T}}|\na^N_{\reg{T}} X|^2-\int_{\reg{T}}\tr_{\reg{T}}\ri{R_M(\ip,X)\ip,X}\\
		=&-\int_{\reg{T}}\no{\ri{A(\ip,\ip),U}}^2+\int_{\reg{T}}|\na^N_{\reg{T}} U|^2-\int_{\reg{T}}\tr_{\reg{T}}\ri{R_M(\ip,U)\ip,U}\\
		&-\int_{\reg{T}}\no{\ri{A(\ip,\ip),-V}}^2+\int_{\reg{T}}|\na^N_{\reg{T}} (-V)|^2-\int_{\reg{T}}\tr_{\reg{T}}\ri{R_M(\ip,-V)\ip,-V}\\
		=&-\int_{\reg{T}}\no{\ri{A(\ip,\ip),U}}^2+\int_{\reg{T}}|\na^N_{\reg{T}} U|^2-\int_{\reg{T}}\tr_{\reg{T}}\ri{R_M(\ip,U)\ip,U}\\
		&-\int_{\reg{T}}\no{\ri{A(\ip,\ip),V}}^2+\int_{\reg{T}}|\na^N_{\reg{T}} V|^2-\int_{\reg{T}}\tr_{\reg{T}}\ri{R_M(\ip,V)\ip,V}\\
		\ge& 0,
		\end{align*}
		because the last two lines corresponds to the sum of the second variation with respect to $U$ and $V$.
	\end{proof}
	The following lemma corresponds to Lemma 1.16 in \cite{HS}, and the proof below is an adaptation of the proof of that.
	\begin{lem}\label{l1.16}
		Suppose $T=\pd[[E]]$ is stationary one-sided minimizing in $U$ with only isolated singularities. If $W$ is a $C^2$ domain, $V$ is open and we have
		\begin{align*}
		W\cs V\cs U, \spt T\cap \pd W\s \reg T,
		\end{align*} with the intersection being transverse. Then there exists $\e=\e(U,W,V,T)>0$ so that for $\Ga_0=\pd(T\llcorner W ),\Ga=\vp\pf \Ga_0$ with 
		\begin{align*}
		\vp\in C^2(\Ga_0,U),\vp(\Ga_0)\s E\cap \pd W,
		\end{align*}and $\no{\vp-i_{\Ga_0}}_{C^2}<\e,$ \footnote{The $C^2$ norm is obtained by embedding $M$ isometrically out of some Euclidean space and then using the $C^2$ norm induced from the Euclidean space} we have an $S$, minimizing in $E\cap V$, with $\pd S=\Gamma,$ $\spt S\s \ov{W\cap {E}},$ $\spt S\cap \pd W=\Ga$ and $S=\pd[[F]]\lp W$ for some open $F\s W$ with $\pd F\cap W=\spt S\cap W.$
	\end{lem}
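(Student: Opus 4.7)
The plan is to set up the obstacle Plateau problem in the class
\[
\mathcal{C}=\{S:\ S\text{ integral }n\text{-current in }U,\ \pd S=\Ga,\ \spt S\s\ov{V\cap E}\},
\]
solve it by the direct method, and then show that when $\vp$ is sufficiently $C^2$-close to $i_{\Ga_0}$ the minimizer is confined to $\ov W$ and has the claimed set-of-finite-perimeter structure. Since $\Ga$ and $\Ga_0$ are $C^2$-close curves on $\pd W$ with $\Ga\s E\cap\pd W$, there is an $n$-chain $K\s\ov{E\cap\pd W}$ with $\pd K=\Ga-\Ga_0$ and $\ma(K)\to 0$ as $\no{\vp-i_{\Ga_0}}_{C^2}\to 0$, so $S_0:=T\lp W+K\in\mathcal{C}$. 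A minimizing sequence has uniformly bounded mass, and Federer--Fleming compactness together with lower semicontinuity of mass produce a minimizer $S\in\mathcal{C}$; any competitor $S+X$ with $X$ a cycle supported in $\ov{V\cap E}$ lies in $\mathcal{C}$, so $\ma(S)\le\ma(S+X)$, i.e.\ $S$ is one-sided area-minimizing in $E\cap V$.

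The main obstacle is showing $\spt S\s\ov W$. I argue by contradiction: suppose $\vp_k\to i_{\Ga_0}$ in $C^2$ with corresponding minimizers $S_k$ satisfying $\spt S_k\not\s\ov W$. The bound $\ma(S_k)\le\ma(T\lp W+K_k)\to\ma(T\lp W)$ and Federer--Fleming compactness extract a subsequential flat/varifold limit $S_\infty$ with $\pd S_\infty=\Ga_0$, $\spt S_\infty\s\ov{V\cap E}$, and $\ma(S_\infty)\le\ma(T\lp W)$. The monotonicity formula applied to $S_\infty$ and Hardt's boundary regularity for stable minimizing hypersurfaces with smooth prescribed boundary (together with $\Ga_0\s\reg T$ and Proposition \ref{regone}) give that $S_\infty$ is $C^{1,\ai}$ up to $\Ga_0$. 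Since $T\s\pd E$ and $\spt S_\infty\s\ov E$, the strong maximum principle for smooth minimal hypersurfaces with common boundary $\Ga_0$ then forces $S_\infty=T$ in a tubular neighborhood $N$ of $\pd W$. Varifold convergence $\mu_{S_k}\to\mu_{S_\infty}$ combined with Allard's regularity theorem realizes $\spt S_k\cap N$ as a $C^{1,\ai}$ normal graph over $\spt T\cap N$ for $k$ large, so $\spt S_k\cap N\s\ov W$; any component of $\spt S_k$ outside $\ov W$ disjoint from this graph is a boundaryless integral cycle whose removal gives a smaller competitor in $\mathcal{C}$, violating minimality. Hence $\spt S_k\s\ov W$, contradicting the standing assumption.

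With $\spt S\s\ov W$ secured, let $K'$ be the $n$-chain in $\pd W\cap\ov E$ with $\pd K'=\Ga$. Then $S+K'$ is a closed integral $n$-current supported in $\ov{W\cap E}\s M^{n+1}$, and the constancy theorem gives $S+K'=\pd[[F]]$ for some open $F\s W\cap E$; consequently $S=\pd[[F]]\lp W$ with $\pd F\cap W=\spt S\cap W$. Applying Hardt's boundary regularity and the same compactness argument to $S$ itself shows $S$ is a $C^{1,\ai}$ normal graph over $T\lp W$ in a neighborhood of $\Ga$, so $\spt S\cap\pd W=\Ga$ with transverse intersection. The genuinely hard step is the confinement: in the two-sided \cite{HS} setting one compares against caps on either side of $T$, but the obstacle $\spt S\s\ov E$ here precludes two-sided caps, and the argument is routed instead through compactness, Allard regularity, and the transverse smooth structure of $T$ at $\Ga_0$.
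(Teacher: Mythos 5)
Your overall architecture (solve the obstacle Plateau problem in $\ov{V\cap E}$ by the direct method, pass to a limit as $\vp\to i_{\Ga_0}$, identify the limit with $T\lp W$, then use Allard regularity to control the approximating minimizers near $\pd W$) is the paper's. The gap is in how you identify the limit, which is the heart of the proof. The paper gets, in addition to $\ma(S_\infty)\le\ma(T\lp W)$, the \emph{reverse} inequality $\ma(T\lp W)\le\ma(S_\infty)$, because $S_\infty-T\lp W$ is a cycle supported in $\ov{E\cap V}$ and $T$ is one-sided minimizing; hence the replacement current $T'=T-T\lp W+S_\infty$ is itself a stationary one-sided area-minimizing boundary with $\ma(T')=\ma(T)$. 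Decomposing $T'$ into boundaries of Caccioppoli sets, invoking Proposition \ref{regone} for regularity, and applying Wickramasekera's strong maximum principle for singular stationary varifolds forces $\spt T'=\spt T$, and mass equality then gives $T'=T$, i.e.\ $S_\infty=T\lp W$ on all of $W$. You never use the reverse mass inequality, and your substitute --- boundary regularity of $S_\infty$ up to $\Ga_0$ plus a boundary strong maximum principle --- does not close. First, $S_\infty$ minimizes a constrained problem whose prescribed boundary $\Ga_0$ lies \emph{on} the obstacle $\pd E$, so Hardt--Simon/Allard boundary regularity for free minimizers is not available off the shelf there. Second, even granting $C^{1,\ai}$ regularity up to $\Ga_0$, two minimal hypersurfaces sharing the boundary $\Ga_0$ with one confined to $\ov E$ need not coincide near $\Ga_0$: they may detach at a positive angle, and a Hopf-type boundary maximum principle only bites in the tangential case. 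Ruling out transverse detachment is exactly what the mass equality and the minimality of the glued current $T'$ accomplish (a corner along $\Ga_0$ could be cut to strictly decrease area).

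A secondary problem is the last step of your confinement argument: from ``$\spt S_k\cap N$ is a normal graph over $\spt T\cap N$'' you cannot conclude $\spt S_k\cap N\s\ov W$, since $\spt T\cap N$ itself leaves $\ov W$ ($T$ meets $\pd W$ transversally and $N$ is a neighborhood of all of $\pd W$). What is actually needed is the manifold-with-boundary structure of $S_k$ along $\Ga_k\s\pd W$ combined with transversality; this, together with the derivation of the structure $S=\pd[[F]]\lp W$, is precisely the portion the paper delegates to the argument of Lemma 1.17 of \cite{HS} from its formula (1) onward, and your constancy-theorem sketch should be replaced by (or explicitly reduced to) that argument.
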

	\begin{proof}
		Pick any $\Ga_j=(\vp_j)\pf \Ga_0,$ with $\vp_j\in C^2$, $\Ga_j\s E\cap \pd W,$ $\no{\vp_j-i_{\Ga_0}}_{C^2}\le \frac{1}{j}$. Let $S_j$ be the integral current that solves the Plateau problem with boundary $\Ga_j$ and competitors in $\ov{V\cap E}$. In other words,
		\begin{align*}
		M(S_j)=\inf\{M(S)|S\text{ is integral current supported in }\ov{E\cap V},\pd S=\Ga_j\}
		\end{align*}
		Such a minimizer exists by compactness of currents \cite{HF1}. Moreover, it's a stationary current by Lemma 1.20 in \cite{HS}. Recall that area formula depends only on the first derivative of the maps. Thus, we have $$\ma(S_j)\le\ma((\phi_j)\pf T\lp W)\le \ma(T\lp W)+\frac{c}{j}$$ by construction. By compactness theorem of integral currents, we can extract a subsequence (not relabeled) $S_j\to S$ in flat norm. By lower semi-continuity of mass, we have $\ma(S)\le\ma(T\lp W).$ Since $T$ is one-sided area minimizing, we have $\ma(T\lp W)\le \ma(S),$ and thus $\ma(T\lp W)=\ma(S).$ Replicating the argument for compactness of minimizing hypercurrents as in \cite{LS1}, we can deduce that $S$ is a solution of Plateau problem with boundary $\Ga$ and competitors contained in $\ov{E}.$ 
		
		Consider the cycle $T'=T-T\lp W+S$, i.e., replacing a portion of $T$ with $S.$ Note that we have $\ma(T')\le\ma (T'+X)$ for any integral current $X$ with $\pd X=0$ and $\spt X\cs \ov{E\cap V}$, since $\ma(T'+X)=\ma(T+(W-T\lp W+X))$ and $\pd(W-T\lp W+X)=0.$ This implies $T'$ is area-minimizing in $\ov{E\cap V}$. Since we have $\pd T'=0,$ by the decomposition of hypercurrents (Corollary 27.8 in \cite{LS1}) and a straightforward adaptation of Lemma 33.4 in \cite{LS1}, we deduce that for any point $p\in \Ga$, there exists a neighborhood $B_r(p)$ so that $T'\lp B_r(p)=\sum_j \pd[[U_j]]$, where $U_j$ are sets of finite perimeter and $\pd[[U_j]]$ is area-minimizing in $\ov{E\cap V}$, and thus stationary one-sided area-minimizing. By Lemma 1.20 in \cite{HS}, each $\pd[[U_j]]$ is stationary. We can invoke the regularity of stationary one-sided area-minimizing currents (Proposition \ref{regone}) to deduce that $T'$ is smooth outside a set of Hausdorff dimension at most $n-7.$ By the strong maximum principle (Theorem 1.1 in \cite{NW2}) for stationary varifolds, we deduce that $\spt T'=\spt T.$ Since $\ma(T')=\ma(T)$ we deduce immediately that $T'=T.$ This shows $S=T\lp W.$ 
		By Lemma 1.20 in \cite{HS}, we see that $S_j$ are all stationary currents. Thus, by the boundary regularity of $S=T\lp W$ and Allard's boundary regularity theorem \cite{WA2}, we deduce that $\{S_j\}$ are smooth manifolds with boundary on a neighborhood of $\pd S.$ 
		
		The rest of the proof is the same as the proof of Lemma 1.17 starting with its formula (1) in \cite{HS}.
	\end{proof}
	
	\section{Unique foliation on the minimizing side of a hypercone}
	Suppose $C=\pd [[E]]$ is a stationary one-sided area-minimizing hypercone in $\R^{n+1}$ with an isolated singularity at the origin. By maximum principle, $C\cap S^n$ has only one connected component (otherwise some rotation gives the contradiction). Thus, $C$ separates $\R^{n+1}$ into two open connected sets.
	
	We define $\eta_{y,\lam}(x)=\lam\m(x-y)$.
	\begin{thm}\label{t2.1}
		There exists an oriented connected embedded real analytic hypersurface $S\s E$ with $S=\pd[[F]],$ $\ov{F}\s E $, $F$ open, $\textnormal{sing}S=\es,$ $\text{dist}(S,0)=1$ with the following properties
		\begin{itemize}
			\item[(i)] $S$ is area-minimizing in $\ov{E}$;
			\item[(ii)] for any $y\in E$, the ray $\{ty:t>0\}$ intersects $S$ at a single point, and the intersection is transverse;
			\item[(iii)] if $T$ is any other multiplicity one minimizing integral current with $T=\pd [[G]]$ for some $G\s \ov{E}$ and area-minimizing in $\ov{E}$, then either $T=C$ or $T=(\eta_{0,\lam})\pf S$ for some $\lam>0.$
			\item[(iv)] if $V$ is any other stationary boundary of Caccioppoli set $V=\pd [[H]]$ for some $H\s \ov{E}$ and one-sided area-minimizing in $H$, then either $T=C$ or $T=(\eta_{0,\lam})\pf S$ for some $\lam>0.$
		\end{itemize}
	\end{thm}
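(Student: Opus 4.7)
The plan is to adapt the Hardt--Simon construction in \cite{HS} to the one-sided setting, substituting Lemma \ref{l1.16} for their interior Plateau problem and Proposition \ref{regone} for the classical interior regularity of minimizing currents. First I would construct $S$ as follows. Pick a vector field $X$ along $C$ pointing into $E$ (cut off near the singularity at $0$), and for each large $R$ let $\Ga_{R,\e}\s E\cap\pd B_R$ be the image of $C\cap\pd B_R$ under the time-$\e$ flow of $X$. Lemma \ref{l1.16} applied with $W=B_R$, $V=B_{2R}$ produces a one-sided minimizer $S_{R,\e}=\pd[[F_{R,\e}]]\lp B_R$ in $\ov{E\cap B_R}$ with boundary $\Ga_{R,\e}$. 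Letting $\e\to 0$ and $R\to\infty$ along a diagonal subsequence, the Federer--Fleming compactness theorem together with the monotonicity formula yield a limit $S=\pd[[F]]$ with $\spt S\s \ov E$ and $0\notin\spt S$; rescaling lets us take $\text{dist}(S,0)=1$. Smoothness of $S$ then follows from Proposition \ref{regone}.

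For (ii), I would show that $\{\eta_{0,\lam}S\}_{\lam>0}$ foliates $E$. As $\lam\to\infty$ the dilates leave every compact set, while as $\lam\to 0$ the monotonicity formula forces them to converge as varifolds to a multiplicity-one stationary one-sided minimizing cone in $\ov E$, which by the classification of such cones with the specified isolated singularity must be $C$ itself. If two distinct dilates $\eta_{0,\lam_1}S,\eta_{0,\lam_2}S$ shared a point, both being smooth, connected, stationary hypersurfaces in $\ov E$, the strong maximum principle of \cite{NW2} would force them to coincide, contradicting $\text{dist}(\eta_{0,\lam_i}S,0)=\lam_i$. A continuity/degree argument along each ray from the origin then supplies existence and transversality of the intersection with $S$.

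For (iii) and (iv), given a competitor $T=\pd[[G]]$ with $G\s\ov E$ that is either area-minimizing in $\ov E$ or merely stationary and one-sided minimizing, I would slide the family $\eta_{0,\lam}S$. For $\lam$ large enough, $\spt T$ and $\eta_{0,\lam}S$ are disjoint (using the asymptotics of $S$); decrease $\lam$ to the first contact value $\lam_0\in(0,\infty)$. Both surfaces are stationary and, by Proposition \ref{regone}, smooth off sets of Hausdorff dimension at most $n-7$. If the contact point is a regular point of both, the strong maximum principle of \cite{NW2} forces $\eta_{0,\lam_0}S=T$. Otherwise contact occurs at an isolated singularity of $T$, and a tangent-cone analysis (any tangent cone there is a one-sided minimizing cone in $\ov E$ touching the smooth $\eta_{0,\lam_0}S$ tangentially) forces the cone to be $C$, so that $T=C$.

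The hardest part is (iv), where $T$ is only one-sided minimizing and one cannot use flat compactness of minimizing currents nor expect current and varifold convergence to be compatible, as stressed in the introduction. The sliding argument above is organized to sidestep this: the only minimizing-compactness input occurs in the construction of $S$, while $T$ enters exclusively through the strong maximum principle and Proposition \ref{regone}. The delicate technical point I expect to grind on is verifying that $\lam_0$ is positive and finite; this reduces to showing $S$ is asymptotic to $C$ at both ends, which in turn requires classifying tangent cones to stationary one-sided minimizers in $\ov E$ without appealing to full minimizing compactness.
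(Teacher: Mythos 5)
Your construction of $S$ and the foliation property (i)--(ii) follow the Hardt--Simon template that the paper also invokes (with Lemma \ref{l1.16} replacing their Lemma 1.16), and that part is fine. The problem is with (iii)--(iv). A first-contact sliding argument between two \emph{non-compact} hypersurfaces cannot even be started without knowing the asymptotics of $T$ (not of $S$) at infinity and at the origin: since both $\spt T$ and $\eta_{0,\lam}S$ are unbounded and asymptotic to $C$, it is not true for free that they are disjoint for large $\lam$, and the infimum of contact values may be ``attained at infinity,'' where the strong maximum principle of \cite{NW2} gives nothing. To rule this out one must show that the blow-down (and, if $0\in\spt T$, the blow-up) of $T$ is $C$ with multiplicity one, and then use the decay theory of \cite{LS2} and \cite{CHF}; this is exactly how Hardt--Simon prove uniqueness, and it is not a step you can ``sidestep.'' Your closing sentence concedes this but misattributes the difficulty to $S$ and offers no mechanism to resolve it. Separately, your case ``contact at a singular point of $T$ forces $T=C$'' is not right: at a genuine interior first-contact point that is singular for $T$, the tangent cone of $T$ lies on one side of the tangent plane of the smooth leaf, which forces it to be a plane and contradicts singularity; the conclusion $T=C$ arises instead from the degenerate case $\spt T\cap E=\es$.

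For (iv) the gap is essential rather than technical. There $V=\pd[[H]]$ is only stationary and one-sided minimizing, so the blow-down sequence $(\rh_j\m)\pf V$ need not converge as currents and as varifolds to compatible limits: the varifold limit could a priori carry extra (even) multiplicity invisible to the current limit, and then neither Allard regularity nor the graphical decay of \cite{CHF} applies. The paper's proof of (iv) is devoted precisely to closing this: it first identifies the support of the varifold blow-down with $\spt\pd[[E]]$ via stable-varifold compactness \cite{NW1} and the maximum principle \cite{NW2}, identifies the current limit as $\pd[[E]]$ via \cite{BW2}, and then builds explicit almost-conical comparison surfaces $C_j$ on the minimizing side to pin down $\lim_j\ma_{B_r(0)}((\rh_j\m)\pf V)=\ma_{B_r(0)}(\pd[[E]])$, upgrading current convergence to multiplicity-one varifold convergence. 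Without this (or an equivalent mass-pinching argument), your proof of (iv) does not go through. You should either supply such a comparison construction or restructure (iv) around it.
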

	\begin{proof}
		The basic idea is to extract a subsequence from a suitably blowing-up sequence of currents minimizing in $\ov{E}.$  
		
		For (i)-(iii), a straightforward adaptation of the proof of Theorem 2.1 in \cite{HS} applies here. We only need to substitute Lemma 1.16 with our Lemma \ref{l1.16} above. 
		
		For (iv) we need to do a little more work. By Proposition \ref{regone}, we know that $\mu_V$ is a stable varifold. By comparison with Euclidean spheres and one-sided minimizing, for any $p$ in the support of $\mu_V,$ we can deduce that $\mu_V(B_r(p))\le \om_n r^n,$ where $\om_n$ is the volume of $n$-sphere. Thus, by the compactness theorem of varifolds \cite{WA1} and the regularity theorem in \cite{NW1}, we deduce that for any sequence of $\rh_j\to\infty, $ there exists a not relabeled subsequence so that the blow-down $(\rh_j)\du\mu_V\to \mu_\rh$ for some stable varifold $\mu_\rh$ contained in $\ov{E},$ with singular set of Hausdorff dimension at most $n-7.$ Moreover, note that by Hausdorff convergence of support of converging stationary varifolds in compact sets, we must have $0\in\spt{\mu_\rh}.$ By Theorem 1.1 (maximum principle) of \cite{NW2}, we deduce that the support of $\mu_\rh$ coincides with the support of $\pd[[E]].$ By taking a further subsequence we can assume that $(\rh_j\m)\pf V$ converges at the same time as a current to some $V_\rh'=\pd[[H_\rh]],$  with $ H_\rh\s E$ (since $1_E\ge 1_{H_\rh}$ a.e.) By Theorem 1.2 (convergence of associated varifold with added terms) in \cite{BW2}, the support of $V'_\rh$ is contained the support of $\mu_{\rh}$. Since $V'_\rh=\pd[[H_\rh]]$ is a boundary, this can happen only if $H_\rh=\es$ or $H_\rh=E.$ $H_\rh=\es$ cannot happen because the support of $(\rh_j\m)\pf V$ is the same as the support of $(\rh_j)\du \mu_V$, and thus also converges in Hausdorff distance to the support of $\pd [[E]]$, which implies that far off from $\pd[[E]]$, $1_{(\rh_j\m)\pf H}$ is always nonzero.
		
		Thus, we have $(\rh_j\m)\pf H\wc E.$ If we can show that the associated varifold also converges to $\mu_{\pd[[E]]}$, then the same reasoning as in (iii) gives the desired result (using Allard regularity to express the blow-down as graphs and then use \cite{CHF} as in \cite{HS}). To prove this, it suffices to prove that $\ma_{B_r(0)}(\pd[[E]])=\lim_{j\to\infty} \ma_{B_r(0)}((\rh_j\m)\pf \pd[[H]])$ for any $r$, because we already know that $\mu_{\pd[[E]]}=\mu_{\pd[[H_\rh]]}+2W$ for some integral varifold by Thoerem 1.2 of \cite{BW2}. The basic idea is to construct almost conical surfaces $C_j$ in $B_r(0)$ lying on the minimizing side of $(\rh_j\m)\pf \pd[[H]]$ that have area very close to $\pd[[E]]\lp B_r(0).$ Thus, we can deduce inequalities of the form \begin{align}\label{ccp}
		\ma_{B_r(0)}((\rh_j\m)\pf V)\le \ma_{B_r(0)}C_j\le \ma_{B_r(0)}\pd[[E]]+ a_j,
		\end{align} with $a_j\to 0.$ This yields that $\ma_{B_r(0)}\pd[[E]]\ge \limsup_j\ma_{B_r(0)}((\rh_j\m)\pf V).$ Combined with the lowersemi continuity of mass, this gives the desired convergence
		
		The almost conical comparison surfaces can be constructed as follows. For any fixed radius $r$, the support of $\mu_{(\rh_j\m)\pf V}$ is within Hausdorff distance strictly smaller than $\e_j$ from $\pd[[E]]\lp B_r(0)$, with $\e_j\to 0.$ Let $\de$ denote a fixed radius of a smooth tubular neighborhood of the link of $\pd[[E]]$ and $S^n$ in $S^n.$ (We can simply impose it to be a little smaller than the largest possible such radius). For $j$ large enough, consider the tubular neighborhood $U_j$ of length $\e_j$ of $\pd[[E]]$ in $E\cap B_r(0)\sm B_{\e_j/\de}(0).$ Then $P_j=\pd U_j\sm\pd E$ is a smooth hypersurface with boundary by construction, with area $O(\e_j)$ close to that of $\pd[[E]]\cap B_r(0)\sm B_{\e_j/\de}(0)$. Moreover, $P_j$ lies in $(\rh_j\m)\pf H$ since it's more than $\e_j$ away from $\pd[[E]]\lp B_r(0)$. For almost every radius $r,$ $(\rh_j\m)\pf H$ intersect $\pd B_r(0)$ transversely for every $j$ (for each $j$ this is apparently true, and we only have countably many $j$), so without loss of generality, we can suppose the intersection is transverse for the $r$ of our choice. Then we can adjoint with appropriate oreintation the region on $E\cap B_{\e_j/\de}(0)$ bounded by $P_j,$ (which has area of $O(\e_j^n)$) and on $E\cap B_r(0)$ bounded by $P_j$ and $(\rh_j\m)\pf \pd[[H]]$ (which has area of $O(\e_j)$) to obtain a current $\ti{P}_j$ with $\pd\ti{P}_j=\pd ((\rh_j\m)\pf V)\lp B_r(0)$, and $\ma_{B_r(0)}(\ti{P_j})\le \ma_{B_r(0)}(E)+O(\e_j).$ Since $(\rh_j\m)\pf \pd[[H]]$ is one-sided minimizing, we deduce that $\ma_{B_r(0)}((\rh_j\m)\pf \pd[[H]])\le \ma_{B_r(0)}(\ti{P_j}),$ which yields inequality (\ref{ccp}) as claimed.
	\end{proof}

	\section{Local perturbations}
	Suppose $T=\pd[[E]]$ is a stationary one-sided area-minimizing boundary with isolated singularities and $E$ an open Caccioppoli set. Moreover, for every point of $T,$ there exists a multiplicity one varifold tangent cone with isolated singularity. (Multiplicity one refers to the multiplicity at smooth points of the tangent cone, and by Corollary of Theorem 5 of \cite{LS2}, such tangent cones are unique.) Suppose $U,V$ are open sets with $E\s U$ and $V\cs U$, with $\spt T\cap U=\pd E\cap U$. Moreover, suppose we have a $C^2$ domain $W\cs V$ so that the intersection $\spt T\cap \pd W\s \reg T$ is transverse. Then we have the following.
	\begin{thm}\label{t5.1}
		There is an $\e=\e(E,U,V,W)>0$, so that if $\Ga=\vp\pf \Ga_0$ for some $\vp\in C^2(\Ga_0,\pd W)$ with $\Ga\s \pd W\cap E$, $|\vp-i_{\Ga_0}|_{C^2}<\e,$ and each component of $\Ga$ intersects $E$ nontrivially, then $\Ga$ bounds an integer multiplicity current $S$ with $S$ minimizing in $V$, and $\textnormal{sing}S=\es$ for any such $S.$
	\end{thm}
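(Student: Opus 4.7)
The plan is to combine the existence content of Lemma~\ref{l1.16} with a blow-up contradiction at each isolated singularity of $T$, using the foliation from Theorem~\ref{t2.1} as a rigidity model. Existence of a minimizer $S$ with $\pd S=\Ga$, $\spt S\s\ov{W\cap E}$, and the structural properties $S=\pd[[F]]\lp W$ is a direct consequence of Lemma~\ref{l1.16} applied to the given $\vp$ (after shrinking $\e$). That same lemma also supplies the key compactness statement driving the regularity argument: whenever $\Ga_j=(\vp_j)\pf\Ga_0$ with $\no{\vp_j-i_{\Ga_0}}_{C^2}\to 0$ and $S_j$ are corresponding minimizers, one has $S_j\to T\lp W$ in flat norm and $\ma(S_j)\to\ma(T\lp W)$, which together force varifold convergence $\mu_{S_j}\to\mu_{T\lp W}$.

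Suppose now that the regularity conclusion fails. Extract a sequence as above together with $p_j\in\textnormal{sing}\,S_j\cap W$. Because $T$ has only isolated singularities in $\ov W$, and because on compact subsets of $\reg T\cap W$ the varifold convergence just described combined with Allard's regularity theorem upgrades $S_j\to T\lp W$ to $C^2$ convergence of supports, the points $p_j$ must accumulate, along a further subsequence, at some $q\in\textnormal{sing}\,T\cap\ov W$.

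Now I would blow up at $q$. Set $r_j=|p_j-q|$; if $p_j=q$ infinitely often, replace $p_j$ by the farthest singularity of $S_j$ in a fixed small ball around $q$, which still tends to $q$, so $r_j\to 0$. Consider $T_j'=(\eta_{q,r_j})\pf(T\lp W)$ and $S_j'=(\eta_{q,r_j})\pf S_j$. By the multiplicity-one tangent cone hypothesis, $T_j'\to C_q=\pd[[E_q]]$, a stationary one-sided area-minimizing hypercone with an isolated singularity at the origin. The $S_j'$ are one-sided area-minimizing in $(\eta_{q,r_j})\pf\ov{E\cap V}$, have uniformly bounded mass on bounded sets by monotonicity, and $\spt S_j'\s\ov{(\eta_{q,r_j})\pf E}$. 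Passing to subsequences in both the current and varifold topologies, I obtain a limit $S_\infty=\pd[[F_\infty]]$ with $F_\infty\s\ov{E_q}$ which, by recycling the thin-tube barrier construction from the proof of Theorem~\ref{t2.1}(iv), is one-sided area-minimizing with competitors in $\ov{E_q}$. Theorem~\ref{t2.1}(iv) then forces $S_\infty$ to be $C_q$ or $(\eta_{0,\lam})\pf S_{C_q}$ for some smooth foliation leaf $S_{C_q}$ and $\lam>0$. On the other hand, the rescaled singularities $p_j'=(p_j-q)/r_j$ lie on the unit sphere, so a subsequential limit $p_\infty'$ does, and $p_\infty'$ must be singular on $S_\infty$ (by upper semicontinuity of density combined with Allard's regularity, using the multiplicity-one nature of $S_\infty$). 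Since $C_q$ is singular only at the origin and every foliation leaf $(\eta_{0,\lam})\pf S_{C_q}$ is smooth everywhere, both alternatives are contradicted.

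The main obstacle, as flagged in the introduction, is propagating the one-sided minimizing property — not merely stationarity — through both the convergence $S_j\to T\lp W$ and the blow-up $S_j'\to S_\infty$, since varifold and current convergence need not cooperate for one-sided minimizers. The first is built into Lemma~\ref{l1.16}; the second is handled by replicating the almost-conical comparison argument from the proof of Theorem~\ref{t2.1}(iv): constructing inside $(\eta_{q,r_j})\pf E$ a thin tube along $T_j'$ whose mass in each ball $B_r(0)$ converges to $\ma_{B_r(0)}(C_q)$, and using it as a competitor against $S_j'$, forces $\ma_{B_r(0)}(S_j')\to\ma_{B_r(0)}(S_\infty)$. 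This in turn gives simultaneous varifold convergence and lets $S_\infty$ inherit the one-sided minimality needed to apply Theorem~\ref{t2.1}(iv).
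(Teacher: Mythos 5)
Your overall strategy coincides with the paper's: the paper's proof of Theorem \ref{t5.1} simply defers to the proof of Theorem 5.6 in \cite{HS}, substituting Lemma 1.16 there by Lemma \ref{l1.16} and Theorem 2.1 there by Theorem \ref{t2.1}, and what you have written is essentially a reconstruction of that blow-up-and-classify argument (existence and mass/varifold convergence from Lemma \ref{l1.16}, accumulation of singularities at $\textnormal{sing}\,T$ via Allard, rescaling at the rate $r_j=|p_j-q|$, classification of the limit by the foliation theorem, contradiction from a singular point surviving on the unit sphere).

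One step fails as written: your treatment of the degenerate case $p_j=q$. If $q$ happens to be the only singular point of $S_j$ near $q$, the ``farthest singularity in a small ball'' is still $q$ itself and $r_j=0$, so the rescaling $\eta_{q,r_j}$ is undefined. The correct way to exclude this is the strong maximum principle (Theorem 1.1 of \cite{NW2}), exactly as the paper does for the analogous point $|s_j-p|>0$ in the proof of Theorem \ref{osp}: $S_j$ is stationary by Lemma 1.20 of \cite{HS} and $\spt S_j\s\ov{E}$ lies on one side of $\spt T$, so an interior touching point would force the two supports to share a connected component near that point, which propagates to force $\pd S_j$ onto $\spt T$ and contradicts $\Ga_j\s E\cap\pd W$ with $E$ open; hence $\spt S_j\cap\spt T\cap W=\es$ and in particular $p_j\ne q$ automatically. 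A smaller point: your blow-up limit $S_\infty=\pd[[F_\infty]]$ with $F_\infty\s\ov{E_q}$ is genuinely area-minimizing in $\ov{E_q}$, not merely stationary one-sided minimizing, so clause (iii) of Theorem \ref{t2.1} is the natural one to invoke rather than (iv); this also yields the varifold convergence $\mu_{S_j'}\to\mu_{S_\infty}$ by the standard cut-and-paste mass comparison for (obstacle) minimizers, without needing the heavier almost-conical tube construction from the proof of Theorem \ref{t2.1}(iv), although that construction does also work.
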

	\begin{proof}
		The same proof of Theorem 5.6 in \cite{HS} applies. We only have to substitute Lemma 1.16 in \cite{HS} with our Lemma \ref{l1.16} and Theorem 2.1 in \cite{HS} with our Theorem \ref{t2.1}.
	\end{proof}
	Next, we will strengthen Theorem \ref{t5.1} in a special case. 
	\begin{lem}\label{utc}
		If a varifold tangent cone of a one-sided minimizing $T$ at some point $p$ only has an isolated singularity, then the varifold tangent cone is unique at that point $p$ and coincides with the varifold associated to unique tangent current at $p$.
	\end{lem}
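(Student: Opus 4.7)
The plan is to mimic the strategy used in the proof of Theorem \ref{t2.1}(iv), with the global hypercone there replaced by the tangent object at $p$. Fix any sequence $\rh_j\dto 0$, set $T_j=(\eta_{p,\rh_j})\pf T=\pd[[E_j]]$, and note that each $T_j$ is stationary one-sided minimizing in $E_j$ with uniform Euclidean density upper bounds from one-sided comparison with balls. By Proposition \ref{regone} each $\mu_{T_j}$ is stable, so after passing to a subsequence we obtain simultaneously a current limit $T_j\to C=\pd[[E_0]]$ and a varifold limit $\mu_{T_j}\to V_\infty$, where $V_\infty$ is stationary and stable with singular set of Hausdorff dimension at most $n-7$ by the regularity theorem of \cite{NW1}.

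Next I would use the hypothesis to pin down the tangent current. The assumed varifold tangent $V^\ast$ with only an isolated singularity arises from some blow-up sequence; passing to further subsequences, Theorem 1.2 of \cite{BW2} yields $V^\ast=\mu_{C^\ast}+2W^\ast$ for a tangent current $C^\ast$ and a nonnegative integral varifold $W^\ast$ supported on $\spt V^\ast$. Since $\spt C^\ast\s\spt V^\ast$, and $\spt V^\ast$ is smooth away from $0$, the strong maximum principle (Theorem 1.1 of \cite{NW2}), applied exactly as in the proof of Theorem \ref{t2.1}(iv), gives $\spt C^\ast=\spt V^\ast$. Thus $C^\ast$ itself has an isolated singularity, and by the Corollary to Theorem 5 of \cite{LS2} the tangent current at $p$ is unique; call it $C=\pd[[E_0]]$.

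Now for the arbitrary blow-up sequence above, Theorem 1.2 of \cite{BW2} again gives $V_\infty=\mu_C+2W$ with $W\ge 0$ integral, so in particular $\no{V_\infty}(B_r(0))\ge \ma_{B_r(0)}(C)$ for every $r$. The reverse inequality will follow from the almost-conical comparison surface construction in Theorem \ref{t2.1}(iv), applied verbatim with $\spt C$ in place of the global hypercone's support: for a.e.\ $r>0$ the sphere $\pd B_r(0)$ meets every $T_j$ transversely, and the $\e_j$-Hausdorff proximity of $\spt T_j$ to $\spt C$ lets one build a smooth hypersurface $P_j\s E_j\cap B_r(0)\sm B_{\e_j/\de}(0)$ as the outer face of a thin tubular neighborhood of $\spt C$, close it off across the tiny ball $B_{\e_j/\de}(0)$ and across the collar near $\pd B_r(0)$, and thereby produce a competitor $\ti P_j$ with $\pd \ti P_j=\pd(T_j\lp B_r(0))$ and $\ma_{B_r(0)}(\ti P_j)\le \ma_{B_r(0)}(C)+O(\e_j)$. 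One-sided minimality of $T_j$ then yields $\ma_{B_r(0)}(T_j)\le \ma_{B_r(0)}(\ti P_j)$, hence $\limsup_j \ma_{B_r(0)}(T_j)\le \ma_{B_r(0)}(C)$. Combined with lower semicontinuity $\no{V_\infty}(B_r(0))\le \liminf_j \ma_{B_r(0)}(T_j)$, this forces $W\lp B_r(0)=0$ for a.e.\ $r$, hence $W=0$ and $V_\infty=\mu_C$.

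Since every subsequential varifold limit equals $\mu_C$, the full blow-up sequence of varifolds converges to $\mu_C$, proving both uniqueness of the varifold tangent cone at $p$ and its coincidence with the varifold of the unique tangent current. The main obstacle is the reverse mass inequality in the third paragraph; however, once we have promoted the isolated-singularity hypothesis on the varifold tangent to an isolated-singularity statement for the tangent current via the strong maximum principle, that step becomes essentially the same construction already carried out at the vertex of a globally conical one-sided minimizer in Theorem \ref{t2.1}(iv), and the remainder of the argument is bookkeeping.
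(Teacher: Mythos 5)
Your ingredients are the right ones (White's Theorem 1.2, a support identification, the almost-conical competitors, Simon's uniqueness theorem), but the order in which you deploy them creates a genuine gap. The Corollary to Theorem 5 of \cite{LS2} requires the tangent cone to be \emph{multiplicity one} as a varifold, not merely to have an isolated singularity: its proof writes the rescalings as single-valued graphs over the cone via Allard's theorem. At the point where you invoke it you only know $\spt C^\ast=\spt V^\ast$ and $V^\ast=\mu_{C^\ast}+2W^\ast$; nothing yet excludes $W^\ast\neq 0$, i.e.\ $V^\ast$ being a higher integer multiple of the cone over the link. Because $T$ is only one-sided minimizing, the multiplicity-one character of the limit \emph{current} $C^\ast=\pd[[E_p]]$ does not transfer to the limit \emph{varifold} --- this failure of the two convergences to agree is precisely the difficulty the lemma exists to handle. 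The paper therefore runs the almost-conical mass comparison \emph{first}, on the distinguished subsequence producing $V^\ast$, to conclude $W^\ast=0$ and hence multiplicity one, and only then applies Simon. Your attempt to recover $W=0$ afterwards, for an \emph{arbitrary} blow-up sequence, does not close the circle: the competitor construction needs $\spt T_j$ to lie within $\e_j$ of $\spt C$, but Hausdorff convergence of supports only places $\spt T_j$ near $\spt V_\infty\supseteq\spt C$, which could be strictly larger if $W\neq 0$ --- exactly the case you are trying to exclude. (Once the comparison is done on the good sequence and Simon is applied, the arbitrary-sequence step becomes unnecessary.)

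Two smaller points. First, the strong maximum principle of \cite{NW2} is not the tool that gives $\spt C^\ast=\spt V^\ast$ here; in Theorem \ref{t2.1}(iv) it compares two \emph{different} stationary varifolds, one confined to one side of the other. For the tangent current versus its own tangent varifold, the correct argument is the topological one in the paper: since $\pd E_p$ is contained in the cone over the connected link $L$, the set $E_p$ must (up to null sets) be empty, everything, or the cone over one component of $S^n\setminus L$; the degenerate cases are then excluded using Hausdorff convergence of supports. Your step as written does not rule out $C^\ast=0$. Second, your claim that $\no{V_\infty}(B_r(0))\le\liminf_j\ma_{B_r(0)}(T_j)$ is actually an equality by varifold convergence, so the contradiction comes from combining it with lower semicontinuity of current mass against $\mu_C$; this is cosmetic, but worth stating correctly when you reorder the argument.
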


	\begin{proof}
		The idea of the proof is very similar to that of the Theorem \ref{t2.1} (iv). First, since a varifold tangent $V_p$ cone of $T$ at $p$ has only isolated singularity, its link $L$ with $S^n$ would be a connected smooth hypersurface. The link $L$ is connected by maximum principle, and thus is a boundary of a connected region $O$ by basic algebraic topology. ($V_p$ is indeed an integer multiple of the cone over $L$ as a varifold by the Regularity and Compactness Theorem in \cite{NW1}.) Recall that $V_p=(\eta_{p,\rh_j})\du \mu_T$ for some some sequence $\rh_j\to\infty$ by definition. By taking a not relabeled subsequence, we can assume that $(\eta_{p,\rh_j\m})\pf T$ converges to a tangent current $T_p=\pd[[E_p]]$ for some open set $E_p.$ Again, the support of $T_p$ is contained in $\spt (V_p)$ by Theorem 1.2 in \cite{BW2}. By intersecting $E_p$ with $S^n,$ we can get an open set with boundary contained in $L$. Thus, either we have $E_p\cap S^n=\es$ or  $S^n,$ or $E_p\cap S^n$ must coincide with $O$ if we choose $O$ accordingly. By Hausdorff convergence of support of the associated varifold to the blow-up sequence of $T,$ when $\rh_j$ is large enough, the associated $(\eta_{p,\rh_j\m})\pf E$ has boundary $\e$ close to $\spt T$, thus, it always contains an open subset $\e$ away from $\spt T$ of nontrivial $\hm^{n}$ measure. This rules out the case $E_p=\es$. The case for $E_p=\R^n$ can be similarly ruled out. Thus we must have $E_p$ is just the cone over $O$ suitably oriented. Then, the same almost conical comparison argument as in the proof of the Theorem \ref{t2.1} (iv) can be applied to deduce that $V_p$ is just the varifold associated with $T_p$. Since $\mu_{T_p}$ is of multiplicity one, we can use Corollary of Theorem 5 in \cite{LS2} to deduce the uniqueness of tangent varifold, and thus the uniqueness of tangent current.
	\end{proof}
	\begin{remark}
		The condition in Lemma \ref{utc} is automatically satisfied if $n=7$ by Proposition \ref{regone}.
	\end{remark}
	We will use this proposition to prove a strengthened version of Theorem \ref{t5.1} in dimension $n+1=8.$
	\begin{thm}\label{osp}
		Assume the same conditions as in Theorem \ref{t5.1}, and $n+1=8.$ Suppose we have a sequences of stationary one-sided area minimizing hpyersurfaces $T_j=\pd[[E_j]]$ lying on one-side of $T,$ i.e., $E_j\s E,$ so that the boundaries $\pd(T_j\lp W)$ in $W$ lie exclusively on $\pd W,$ in other words $\spt\pd(T_j\lp \ov{W})\s \pd W.$ Moreover, the boundary $\pd(T_j\lp W)$ never coincides with $\pd(T\lp W)$. If $T_j\to T$ as current, and $\mu_{T_j}\to \mu_T$ as varifold simultaneously, then for $j$ large enough, $T_j$ is smooth everywhere.
	\end{thm}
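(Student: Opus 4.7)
The plan is to argue by contradiction, using a blow-up at an interior singular point of $T$ and Theorem \ref{t2.1}(iv) to classify the blow-up limit. After passing to a subsequence (never relabeled), assume each $T_j$ has a singular point $p_j\in W$; such a point is one of finitely many since $n+1=8$ forces the singular set of $T_j$ to be $0$-dimensional by Proposition \ref{regone}. A further subsequence has $p_j\to p_\infty\in\ov W$. The stable-varifold regularity of \cite{NW1} (upgraded to smooth convergence by the minimal surface equation) turns $\mu_{T_j}\to\mu_T$ into smooth convergence in a neighborhood of every regular point of $T$, including those on $\pd W$ where the transverse intersection hypothesis applies; together with the standing assumption $\spt T\cap\pd W\s\reg T$ this forces $p_\infty\in\textnormal{sing}\,T\cap W^\circ$. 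Lemma \ref{utc} then identifies the tangent cone $C_{p_\infty}=\pd[[E_{p_\infty}]]$ of $T$ at $p_\infty$ as the unique multiplicity-one one-sided minimizing hypercone whose current and varifold tangents coincide.

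Next I would blow up at $p_\infty$ at the scale
\[
r_j\;=\;\max\!\Bigl\{\,|p_j-p_\infty|,\ \sup_{q\,\in\,\spt T_j\cap B_{R_0}(p_\infty)}\textnormal{dist}(q,\spt T)\,\Bigr\}
\]
for a fixed small $R_0$, with $\tilde T_j=(\eta_{p_\infty,r_j})\pf T_j$ and $\tilde p_j=\eta_{p_\infty,r_j}(p_j)$, so that $|\tilde p_j|\le 1$ and the rescaled support lies within Hausdorff distance $\le 1$ of that of $(\eta_{p_\infty,r_j})\pf T$ on bounded balls. A further subsequence gives $\tilde T_j\to T'_\infty=\pd[[E'_\infty]]$ as currents with $E'_\infty\s E_{p_\infty}$, $\mu_{\tilde T_j}\to\mu_\infty$ as varifolds, and $\tilde p_j\to\tilde p_\infty$. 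A Lemma \ref{l1.16}-type limit argument shows $T'_\infty$ is stationary one-sided area-minimizing in $\ov{E'_\infty}$, so Theorem \ref{t2.1}(iv) forces either $T'_\infty=C_{p_\infty}$ or $T'_\infty=(\eta_{0,\lambda})\pf S=:S_\lambda$ for some $\lambda>0$. The main technical step is establishing $\mu_\infty=\mu_{T'_\infty}$, i.e.\ $W=0$ in the decomposition $\mu_\infty=\mu_{T'_\infty}+2W$ furnished by Theorem 1.2 of \cite{BW2}. Combining the monotonicity formula for each $\mu_{T_j}$ at $p_\infty$ with the hypothesis $\mu_{T_j}\to\mu_T$ yields, for every continuity radius $R$,
\[
\limsup_j\mu_{\tilde T_j}(B_R)\;\le\;\Theta(\mu_T,p_\infty)\omega_n R^n\;=\;\mu_{C_{p_\infty}}(B_R).
\]
When $T'_\infty=C_{p_\infty}$, this matches $\mu_{T'_\infty}(B_R)$ and, paired with the lower bound $\mu_\infty\ge\mu_{T'_\infty}$ from \cite{BW2}, forces $W=0$. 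When $T'_\infty=S_\lambda$ is a smooth leaf, I would sharpen to $\mu_\infty(B_R)\le\mu_{S_\lambda}(B_R)$ by adapting the almost-conical comparison in the proof of Theorem \ref{t2.1}(iv): shift $S_\lambda$ slightly inward so it sits strictly inside $\tilde E_j$, patch with a thin boundary strip on $\pd B_R$ matching $\pd(\tilde T_j\lp B_R)$, and invoke one-sided minimality of $\tilde T_j$ against this smooth competitor.

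Given $\mu_\infty=\mu_{T'_\infty}$, the upper semicontinuity of the singular set for convergent stable stationary varifolds (built into \cite{NW1}) places $\tilde p_\infty\in\textnormal{sing}(\mu_\infty)=\textnormal{sing}(T'_\infty)$. If $T'_\infty=C_{p_\infty}$, then necessarily $r_j=|p_j-p_\infty|$ (otherwise the scale selection produces a point of $\spt\tilde T_j$ at Hausdorff distance $1$ from $C_{p_\infty}$ in the limit, ruling out $T'_\infty=C_{p_\infty}$), so $|\tilde p_\infty|=1\ne 0$ while $\textnormal{sing}(C_{p_\infty})=\{0\}$; if $T'_\infty=S_\lambda$ is a smooth leaf, then $\textnormal{sing}(T'_\infty)=\es$. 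Both options contradict $\tilde p_\infty\in\textnormal{sing}(\mu_\infty)$, closing the argument. I expect the main obstacle to be the leaf-case comparison — specifically, verifying that the inward-shifted leaf lies strictly inside $\tilde E_j$ for $j$ large (using the Hausdorff convergence $\spt\tilde T_j\to S_\lambda$ together with $E_j\s E$) and that the $\pd B_R$ boundary patch contributes only $o(1)$ in mass.
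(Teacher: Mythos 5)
Your overall strategy coincides with the paper's: blow up at the limiting singular point at the scale dictated by the singular points of $T_j$, identify the blow-up limit via Theorem \ref{t2.1}(iv), and contradict Allard regularity at the rescaled singularities. Your monotonicity-formula derivation of $\limsup_j\mu_{\tilde T_j}(B_R)\le\Theta(\mu_T,p_\infty)\omega_nR^n$ is in fact a clean alternative to the paper's comparison-surface construction for identifying the limit varifold in the cone case. However, there is a genuine gap at the step you dispatch as ``a Lemma \ref{l1.16}-type limit argument shows $T'_\infty$ is stationary one-sided area-minimizing.'' This is precisely the crux of the theorem and does not follow from Lemma \ref{l1.16}, which concerns Plateau problems with a \emph{fixed} obstacle $\ov{E}$ and exploits the one-sided minimality of the single ambient current $T$ against the limit of the minimizers. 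Here each $\tilde T_j$ is one-sided minimizing only into its own varying region $\tilde E_j$, and one-sided minimizers admit no general compactness theorem (the introduction stresses exactly this point). The paper's proof of this step occupies its final two paragraphs: assuming $Z=\pd[[O]]$ fails to be one-sided minimizing in some $B_r(0)$, one solves the obstacle Plateau problem to produce a strictly better $O'\s O$, forms $E_j\cup O'$, invokes submodularity of perimeter $P(A\cap B)+P(A\cup B)\le P(A)+P(B)$, controls the non-compactly-contained components of $E_j\sm O'$ using Hausdorff convergence of supports, and contradicts lower semicontinuity of perimeter. Without an argument of this type your proof does not close.

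A second, smaller defect is the scale choice $r_j=\max\{|p_j-p_\infty|,\,\sup_q\textnormal{dist}(q,\spt T)\}$. When the second term is the larger one, you claim the limit cannot be the cone because ``the scale selection produces a point of $\spt\tilde T_j$ at Hausdorff distance $1$ from $C_{p_\infty}$ in the limit''; but the point realizing that supremum is only constrained to lie in $B_{R_0}(p_\infty)$, i.e.\ in $B_{R_0/r_j}(0)$ after rescaling, so it may escape to infinity in the blow-up and no contradiction is visible in any fixed ball. If then $\tilde p_\infty=0$ and $T'_\infty=C_{p_\infty}$, Allard gives you nothing. The fix is simply to take $r_j=|p_j-p_\infty|$ as the paper does, which pins $|\tilde p_j|=1$, a regular point of both the cone and the leaf. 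Note also that you need the strong maximum principle of \cite{NW2} together with the hypothesis that $\pd(T_j\lp W)$ never coincides with $\pd(T\lp W)$ to guarantee $|p_j-p_\infty|>0$ in the first place; your proposal omits this point.
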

	\begin{proof}
		The proof is in a spirit similar to Theorem 5.6 in \cite{HS}. However, there is something tricky about the convergence of varifolds versus that of the currents. The dimension assumption is used in two ways. The first is to use Lemma \ref{utc}, and the second is to deduce that the supports of some stable varifolds involved are boundaries Caccioppoli sets.
		
		By the transversality assumption, there is no boundary singularity. Since the singular set is discrete, there are only finitely many singular points. Pick an interior singular point $p$ of $T\lp W$. Without loss of generality, we can assume this is the only singular point, because proving we can reiterate the proof to arrive at the desired conclusion for the case of several singular points.  By Allard boundary regularity theorem (\cite{WA1}) and the varifold convergence $\mu_{T_j}\to\mu_T,$ we deduce that for $j$ large enough, $T_j$ can only have interior singularities, and they stay away from an $\e$-neighborhood of $\pd T.$ Without loss of generality, we can suppose every $T_j$ has at least one interior singularity $s_j.$ (If only finitely many $T_j$ is singular, then there is nothing to prove. If there are infinitely many, we can just do the reasoning for them.) By the previous reasoning of boundary regularity and Allard interior regularity theorem (\cite{WA2}), we can only have $s_j\to p.$ By maximum principle in \cite{NW2}, we know that $|s_j-p|>0$ for all $j$ since $\pd (T_j\lp W)$ never coincides with $\pd(T\lp W).$ Take the normal coordinate at $p.$ We can do a blow-up and obtain a not-relabeled subsequence so that  the blow-up currents $(|s_j-p|\m)\pf T$ converge to the tangent current $T_p=\pd[[E_p]]$ and the associate varifold $(|s_j-p|)\du \mu_{T}$ converges to its tangent varifold $V_p$. By Proposition \ref{utc}, the two tangents are unique and $V_p=\mu_{T_p}.$ By taking further subsequences, we can assume that $(|s_j-p|\m)\pf T_j$ converges to some boundary of Caccioppoli set $Z=\pd[[O]]$, and the associated varifolds $(|s_j-p|)\du \mu_{T_j}$ converges to some $\ti{V}.$ A prior, we don't know that $\ti{V}=\mu_{Z}$. However, this is indeed the case. We can argue as follows. By the Regularity and Compactness Theorem in \cite{NW1}, we know that $\spt \ti{V}$ is a hypersurface with only discrete singularities. Moreover, the singularities are conical. This implies that each $\spt \ti{V}$ is triangulable and is a cycle as a simplicial complex. Thus, by basic algebraic topology, we know that $\spt \ti{V}=\pd A$ for some Caccioppoli sets $A.$ Again, since each blow-up $Z$ has support $\spt Z\s \spt \ti{V}$, we're left with either $O=A$ (by choosing $A$ appropriately) or $O=\es $ or $O=\R^{n+1}.$ The latter two cases can be ruled out similarly as in the proof of Proposition \ref{utc}. We're left with $O=A$ and we can use a tubular strip comparison argument as in the proof of Theorem \ref{t2.1} (iv) to deduce that we must have $\ti{V}=\mu_{Z}.$
		
		Now, if we can prove $Z$ is stationary one-sided area-minimizing, then by Theorem \ref{t2.1} (iv), either we have $Z=\pd[[E_p]]$ or $Z$ is the unique minimizing hypersurface in $E_p$ Since $s_j$ in the blow-up is always at distance $1$ from $p$, in either case, we arrive at a contradiction with Allard regularity theorem.
		
		We will now go on to prove that $Z=\pd[[O]]$ is stationary one-sided minimizing. It suffices to show this for $Z$ restricted to almost every ball of large enough radius centered at $0$. The idea is to assume it's not one-sided minimizing, and then use a cut-and-paste argument on the hypothetical one-sided minimizer to deduce a contradiction. 
		
		We have proven that the associated varifold of $\pd[[O_j]]$ converges to $\mu_Z$, and thus $Z$ is already stationary. Suppose $Z$ is not one-sided minimizing in $B_r(0)$. Since $B_r(0)$ is contractible, the De Giorgi formulation (Caccioppoli sets) and Federer-Fleming formulation (integral currents) of Plateau problem is the same. Let $O_j=|s_j-p|\du E_j$. Now we solve the De Girogi formulation of Plateau problem with boundary $L$ and competitors in $O\cap B_r(0).$ To be precise, we're looking for minimizers of perimeter among Caccioppoli sets $O'$ with $O\sm B_r(0)=O'\sm B_r(0)$, and $O'\lp B_r(0)\s O\lp O$ so that $O'$. We denote the minimizer by $O'$ and call the boundary as an integral current $Z'=\pd[[O']]\lp B_r(0)$. By our assumption that $Z$ is not one-sided minimizing, we have $\ma(Z'\lp B_r(0))<=\ma(Z\lp B_r(0))$. Consider the Caccioppoli set $E_j\cup O'$ If $E_j\sm O'\cs B_r(0),$ then by definition of one-sided minimizing, $P(E_j\cap O',B_r(0))\ge P(E_j,B_r(0))$. Recall that $P(E_j\cap O',B_r(0))+P(E_j\cup O',B_r(0))\le P(E_j,B_r(0))+P(O',B_r(0))$ (Proposition 3.38 (d) in \cite{LA}), so we deduce that $P(E_j\cup O',B_r(0))\le P(O',B_r(0)).$ If $E_j\sm O'$ is not compactly contained in $B_r(0)$, we can still get an inequality of the form
		\begin{equation}\label{kin}
		P(E_j\cap O',B_{r}(0))\le P(E_j,B_{r}(0))+\e_j
		\end{equation}
		where $\e_j$ is $O(d_H(\spt Z\cap _r(0),\spt Z_j\cap B_r(0)),$ and $\de>0$ a small number with $d_H$ being Hausdorff distance. This can be seen as follows. The union $U$ of all relative non-compact connected components of $E_j\sm O'$ is also a Caccioppoli set. It must have boundary intersecting $B_r(0)$ nontrivially. Moreover, in a slightly larger ball $B_{r+\de}(0),$ $P(O',B_{r+\de})$ and $P(O'+U,B_{r+\de})$ differs only by the measure of the part of $\pd U$ lying on $\pd B_r(0)$. Since $\pd U\cap B_r(0)$ is always contained in $\spt E_j\De O'\lp B_r(0),$ (which is defined for $r$ a.e. by slicing theory), from Hausdorff convergence of support of $Z_j$ to $Z$, the part of $\pd U$ lying on $\pd B_r(0)$ always have $n$-dimensional Hausdorff measure of $O(d_H(\spt Z\cap B_r(0),\spt Z_j\cap B_r(0))$. Thus, we can use $O'+U$ to replace $O'$ and use the one-sided minimizing property of $Z_j$ to deduce \ref{kin}. Thus, we always have
		\begin{align*}
		P(E_j\cup O',B_r(0))\le P(O',B_r(0))+O(d_H(\spt Z\cap B_r(0),\spt Z_j\cap B_r(0))
		\end{align*}
		Let $j\to\infty,$ and take a subsequence if necessary. We have $E_j\cup O'\to E\cup O=E$ as Caccioppoli set, because $1_{E_j\cup O'}\to 1_{E\cup O}$ in $L^1.$ By lower-semi continuity of perimeter, we deduce that 
		\begin{align*}
		P(E,B_r(0))\le \liminf_j P(E_j\cup O',B_r(0))\le P(O',B_r(0))<P(E,B_r(0)),
		\end{align*}
		which is the desired contradiction.
	\end{proof}
	\section*{Acknowledgement}
	This paper originated from an email exchange with Professor Fanghua Lin. The author is very grateful to his support. The only new conclusions in this paper not studied by him before are the ones whose proof require a careful analysis of current convergence compared to the associated varifold convergence. The author is also grateful for an email exchange with Professor Leon Simon about counterexamples regarding the two kinds of convergence.

\end{document}